\documentclass[runningheads]{llncs}
\usepackage{amsmath, amssymb, enumitem, mathtools}
\usepackage[bookmarks, bookmarksnumbered, linktocpage, urlcolor = cyan]{hyperref}

\begin{document}

\title{On the Convergence Rate of the One-Hop Transfer Algorithm}
\author{Ruichao Jiang \and Long Wen}
\authorrunning{R. Jiang and L. Wen}
\institute{Derivation Technology Ltd.\\ \email{\{ruichao.jiang,long.wen\}@derivation.info}}

\maketitle

\begin{abstract}
The transfer algorithm~\cite{jiang} solves the on-chain one-hop swap routing problem. In \cite{jiang}, the convergence is proved but the convergence rate is left open. We prove that the algorithm terminates in at most $\mathcal{O}(N\kappa\log\frac{1}{\varepsilon})$ rounds, where $N$ is the number of AMMs, $\kappa$ a liquidity heterogeneity parameter and $\varepsilon$ a tolerance parameter.
\keywords{Automated market maker \and Decentralized finance \and Routing algorithm \and Convergence analysis.}
\end{abstract}

\section{Introduction}
Automated Market Maker (AMM) Routing algorithms examine fragmented liquidities across different exchanges for the best swap path \cite{angeris-routing,diamandis-routing,diamandis2024convexnetworkflows,jiang}. Jiang et al.\cite{jiang} proposed the transfer algorithm to achieve on-chain efficiency, which is adopted by Monday Trade protocol \cite{monday}. Xi and Moallemi \cite{xi-moallemi} performed an empirical study to show that this simple algorithm actually outperforms many historical real transaction results.

However, the convergence rate is left open in \cite{jiang}. In this paper, we prove that the one-hop transfer algorithm terminates in $\mathcal{O}\left(N\kappa\log\frac{1}{\varepsilon}\right)$ rounds, where $N$ is the number of pools, $\kappa$ is the liquidity heterogeneity (defined below), and $\varepsilon$ is the price tolerance.

Our experiment shows that our convergence rate is too loose on $\kappa$ and the experiment result suggests that the real rate is almost independent of $\kappa$.

The organization of this article is as follows. In Section \ref{sec:background}, we fix our notation. In Section \ref{sec:main-result}, we prove our result. In Section \ref{sec:experiment}, we present our numerical experiment. We conclude in Section \ref{sec:discussion}.
\section{Background} \label{sec:background}
We recall some notations in \cite{jiang}. Denote by $X/Y$ the token that we sell/buy. An AMM is \textit{trading function}
\begin{equation*}
    E_{x,y}:x_{\text{in}}\mapsto E_{x,y}(x_{\text{in}}),
\end{equation*}
which gives the amount of $Y$ obtained when selling $x_{\text{in}}$ of $X$ to the AMM with inventory $(x,y)$. The trading function is non-negative, monotonically increasing, and concave.

The \textit{post-trade price} of $X$ in $Y$ is $E'(x_{\text{in}})$, which decreases with $x_{\text{in}}$ by concavity. The price of $Y$ in $X$ is $P_i(x_{\text{in}})\coloneqq\frac{1}{E'_i(x_{\text{in}})}$.

The one-hop allocation problem is as follows: Given $x$ amount of token $X$ and $N$ AMMs with trading functions $E_1,\ldots,E_N$,
\begin{align*}
    &\text{maximize} \quad F(\mathbf{x}) = \sum_{i=1}^N E_i(x_i),\\
    &\text{subject to} \quad x_i\geq0,\quad \sum_{i=1}^N x_i = X,
\end{align*}
where $x_i$ is the fraction of input allocated to pool $i$. The optimality condition says that all active pools share the same post-allocation price $E_i'(x_i^*)=\lambda^*$.

Then we briefly describes the transfer algorithm. At each round, the donor $D$ is defined to be the AMM with the highest price of $Y$ in $X$:
\begin{equation*}
    D\coloneqq\arg\max_{i:\,x_i>0} P_i(x_i)
\end{equation*}
and the receiver
\begin{equation*}
    R\coloneqq\arg\min_{i} P'_i(x_i).
\end{equation*}
The transfer algorithm iteratively transfers allocation from $D$ to $R$. A transfer of amount $\delta$ is said to be \textit{legitimate} if after the transfer,
\begin{equation} \label{eqn:legitimacy}
    P_R\leq P_D.
\end{equation}
The algorithm finds a legitimate $\delta$ by halving the transfer amount: Start with $\delta=\frac{x_D}{2}$ and repeatedly halve until legitimacy holds. The algorithm terminates when $\frac{P_D-P_R}{P_D}<\varepsilon$.

We introduce the \textit{smoothness parameter}:
\begin{equation*}
    -E_i''(x)\leq L,
\end{equation*}
i.e. $E_i'$ is $L$-Lipschitz. Since $[0,X]$ is compact, we can always find such $L$.

Then we introduce our only assumption in this paper: AMMs trading functions are assumed to be strictly concave: There exists $\mu>0$ s.t.
\begin{equation*}
    \mu\leq-E_i''(x),
\end{equation*}
for each $i$ and $x\in[0,X]$. $\mu$ is called the \textit{strong concavity parameter}.
Only constant sum AMMs are not strongly concave as its $\mu=0$. Such AMM is in some sense degenerate. In practice, once a constant sum AMM becomes a receiver, we should take as many $Y$ from it as possible because there is no price impact.
\begin{definition}[Liquidity heterogeneity]
    The liquidity heterogeneity $\kappa$ is defined as
    \begin{equation*}
        \kappa=\frac{L}{\mu}.
    \end{equation*}
\end{definition}
\section{Main result} \label{sec:main-result}
Denote by
\begin{equation*}
    h(\textbf{x})\coloneqq F(\mathbf{x}^*)-F(\mathbf{x})\geq0
\end{equation*}
the \textit{objective gap} and by
\begin{equation*}
    g_{\mathbf{x}}\coloneqq E_R'(x_R)-E_D'(x_D)>0
\end{equation*}
the \textit{donor-receiver gap} when the allocation is $\mathbf{x}$.

Furthermore, we define the function
\begin{equation*}
    \varphi(t)\coloneqq E_R'(x_R+t)-E_D'(x_D-t),
\end{equation*}
$t\in[0,x_D]$.

Then
\begin{equation} \label{eqn:varphi-0}
    \varphi(0)=g_{\textbf{x}}>0
\end{equation}
and
\begin{equation} \label{eqn:varphi-derivative}
    \varphi'(t)=E_R''(x_R+t)+E_D''(x_D-t)\in[-2L,-2\mu].
\end{equation}
When referring to a specific round $k$, we use the superscript $(k)$ for each variable. The only exception is $g$, for which we use subscript $g_k$ because we will come across $g$ squared.
\begin{lemma} \label{lemma:legitimacy}
    $\varphi(x_D)<0$.
\end{lemma}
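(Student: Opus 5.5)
The plan is to reduce the claim to a comparison between the gap $g_{\mathbf{x}}$ and the donor's allocation $x_D$, using strong concavity, and then to supply that comparison from the algorithm's history. Writing out the endpoint, we must show
\begin{equation*}
    \varphi(x_D)=E_R'(x_R+x_D)-E_D'(0)<0 .
\end{equation*}
By the fundamental theorem of calculus together with \eqref{eqn:varphi-0} and \eqref{eqn:varphi-derivative},
\begin{equation*}
    \varphi(x_D)=g_{\mathbf{x}}+\int_0^{x_D}\varphi'(t)\,dt\le g_{\mathbf{x}}-2\mu x_D .
\end{equation*}
So it suffices to prove the quantitative statement $g_{\mathbf{x}}<2\mu x_D$. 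Equivalently, it suffices to prove separately $E_R'(x_R+x_D)\le E_R'(x_R)-\mu x_D$ and $E_D'(0)\ge E_D'(x_D)+\mu x_D$, and then absorb the gap.

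The inequality $g_{\mathbf{x}}<2\mu x_D$ does \emph{not} follow from the definitions of $D$ and $R$ alone. A very deep receiver with nearly flat $E_R'$ and a tiny donor with $E_D'(0)<E_R'(x_R)$ would violate it. It therefore has to come from the dynamics, and I would prove by induction on the round $k$ an invariant of the following form: every active pool $i$ satisfies $E_i'(0)> E_j'(x_j+x_i)$ for every pool $j$. In words, no pool could absorb an active pool's entire allocation and still be strictly more attractive than that pool's zero-allocation price. Applied with $i=D$ and $j=R$, this is exactly the lemma.

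For the base case I would use the initial allocation of the algorithm in \cite{jiang}. When all of $X$ is put into the single best pool, the only active pool is the one with the largest $E_i'$, so the invariant holds as long as the initial pool is chosen with respect to the marginal price over the whole amount. For the inductive step, a legitimate transfer \eqref{eqn:legitimacy} from $D$ to $R$ of size $\delta$ leaves $E_R'(x_R+\delta)\ge E_D'(x_D-\delta)$. I would check the invariant for the newly enlarged $R$, for the shrunken $D$ (whose allocation only decreased, so the inequality becomes easier), and for third pools, using monotonicity of each $E_j'$.

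The main obstacle is this inductive step for the receiver $R$. Once $R$ is larger, a third pool $j$ may be able to swallow $x_R+\delta$ at a better rate than $E_R'(0)$. I would try to handle it by exploiting that $R$ was the argmax of $E'$ at the current round, so that $E_R'(0)\ge E_R'(x_R)\ge E_j'(x_j)$. If this does not close the argument, the fallback is to weaken the invariant to the donor-specific inequality $g_{\mathbf{x}}<2\mu x_D$ and verify it directly via the halving rule.
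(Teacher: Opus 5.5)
\textbf{There is a genuine gap.} Your invariant is true, and its instance $(i,j)=(D,R)$ is exactly the lemma, but neither route you outline proves it.

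First, the reduction to $g_{\mathbf{x}}<2\mu x_D$, which you keep as the fallback, is false even along the algorithm's trajectory. The dynamics give at best $g_{\mathbf{x}}\le E_D'(0)-E_D'(x_D)\le Lx_D$, and this is nearly attained when $D$ is sharply curved and the receiver's rate sits just below $E_D'(0)$. For example, take $X=1$ and quadratic pools with $E_A'(t)=159-100t$, $E_D'(t)=110-100t$, $E_C'(t)=109.99-t$, so $\mu=1$ and $L=100$. Greedy initialization puts everything into $A$. The halving rule then moves $1/4$ to $D$ and afterwards $3/16$ to $C$. In the third round the donor is $D$ and the receiver is $C$, with $g_{\mathbf{x}}\approx 24.8>2\mu x_D=0.5$, while $\varphi(x_D)\approx-0.45<0$.

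Second, in your induction the monotonicity is reversed for the donor. Each $E_j'$ is decreasing, so shrinking $x_D$ to $x_D-\delta$ \emph{raises} $E_j'(x_j+x_D-\delta)$ for $j\neq R$. The shrunken donor's inequality therefore becomes harder, not easier. The $j=D$ term for third active pools moves the same way and needs legitimacy, not monotonicity. Your hypothesis $E_i'(0)>E_j'(x_j+x_i)$ cannot control this, because it does not yield $E_i'(0)\ge E_j'(x_j)$. Concretely, take only $D$, $R$ active and one idle pool $j$, in a state with $E_R'(x_R)>E_j'(0)>E_D'(0)>\max\{E_R'(x_R+x_D),E_j'(x_D)\}$. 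This state satisfies your invariant, yet after a legitimate transfer of $x_D/2$ the invariant can fail for the pair $(D,j)$. So the invariant is not inductive on its own. The receiver step you single out is in fact the easy case, and your own suggestion closes it.

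The missing idea is the stronger, simpler invariant $E_i'(0)\ge\max_j E_j'(x_j)$ for every active pool $i$.
\begin{itemize}
\item It holds when $i$ becomes active, either as the greedy initial pool or as a receiver at zero allocation, when $E_i'(0)$ is the current maximum. For the base case, choosing the pool with the best initial marginal price $E_i'(0)$ already suffices.
\item It persists because $\max_j E_j'(x_j)$ is non-increasing over rounds. The receiver's rate drops; the donor's rate rises but by \eqref{eqn:legitimacy} stays at most the receiver's new rate; all other rates are unchanged.
\end{itemize}
The lemma is then one line, $E_R'(x_R+x_D)<E_R'(x_R)=\max_j E_j'(x_j)\le E_D'(0)$. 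This is the paper's argument, phrased there as a contradiction with the monotonicity of the receiver's price. Your all-$j$ invariant follows from it by strict concavity, and no comparison of $g_{\mathbf{x}}$ with $\mu x_D$ is needed.
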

\begin{proof}
    Suppose that the current round is $k$. Suppose for contradiction that $\varphi(x_D)\geq0$, i.e. $E_R'(x_R+x_D)-E_D'(0)\geq0$. Since pool $D$ acquired its allocation in some previous round $k'<k$, in which $D$ was selected as the receiver, it would follow that
    \begin{equation*}
        \max_{i}E'_i\left(x_i^{(k')}\right)=E'_D(0)\leq E_R'(x_R+x_D)<E_R'(x_R)= \max_{i}E'_i\left(x_i^{(k)}\right),
    \end{equation*}
    where the second inequality is by the concavity of $E_R$, contradicting that the receiver's price of $Y$ is non-decreasing in rounds.
\end{proof}
\begin{remark}
   We claimed that the allocation to $D$ is from a round in which $D$ was the receiver. This excludes, for example, the uniform allocation as the initialization method for the transfer algorithm. But it is easily achieved by using a greedy initialization, which includes the simplest allocating all input to the pool with the best initial price.
\end{remark}
Then, we prove a simple geometric lemma.
\begin{lemma}[Sandwich lemma] \label{lemma:sandwich}
    \begin{equation*}
        \max_{i}\left|E'_i(x_i)-\lambda^*\right|\leq g_{\mathbf{x}}.
    \end{equation*}
\end{lemma}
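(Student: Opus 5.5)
The plan is to trap $\lambda^*$ between the donor's and the receiver's marginal prices, that is, to show
\begin{equation*}
E_D'(x_D)\le \lambda^*\le E_R'(x_R).
\end{equation*}
Every marginal price $E_i'(x_i)$ that also lies in the interval $[E_D'(x_D),E_R'(x_R)]$, which has length $g_{\mathbf{x}}$, then satisfies $|E_i'(x_i)-\lambda^*|\le g_{\mathbf{x}}$. Throughout I use the KKT conditions of the concave allocation problem: $E_j'(x_j^*)=\lambda^*$ if $x_j^*>0$, and $E_j'(0)\le\lambda^*$ if $x_j^*=0$. I read $R$ as the pool with the largest marginal price $E_i'(x_i)$ over all $i$, which is what the definition of $R$ and the proof of Lemma~\ref{lemma:legitimacy} use. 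I read $D$ as the pool with the smallest $E_i'(x_i)$ among active pools.

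\textbf{Step 1: $\lambda^*\le E_R'(x_R)$.} Since $\sum_j x_j=\sum_j x_j^*=X>0$, some $j$ has $x_j^*>0$ and $x_j\le x_j^*$. If every $j$ with $x_j^*>0$ had $x_j>x_j^*$, the sum of $\mathbf{x}$ would exceed $X$. For this $j$, concavity and KKT give $E_j'(x_j)\ge E_j'(x_j^*)=\lambda^*$. By the maximality of $R$, $E_R'(x_R)\ge E_j'(x_j)\ge\lambda^*$.

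\textbf{Step 2: $E_D'(x_D)\le\lambda^*$.} Symmetrically, some $j$ has $x_j>0$ and $x_j\ge x_j^*$. For this $j$, concavity and KKT give $E_j'(x_j)\le E_j'(x_j^*)\le\lambda^*$; this holds whether $x_j^*$ is positive or zero. Since $j$ is active in $\mathbf{x}$, the minimality of $D$ gives $E_D'(x_D)\le E_j'(x_j)\le\lambda^*$.

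\textbf{Step 3: conclude for each $i$.} The upper deviation is handled for every $i$: $E_i'(x_i)-\lambda^*\le E_R'(x_R)-E_D'(x_D)=g_{\mathbf{x}}$, using $E_i'(x_i)\le E_R'(x_R)$ and Step 2. The lower deviation is handled for every active $i$: $\lambda^*-E_i'(x_i)\le E_R'(x_R)-E_D'(x_D)=g_{\mathbf{x}}$, using Step 1 and $E_i'(x_i)\ge E_D'(x_D)$.

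\textbf{Main obstacle.} The remaining case is an inactive pool, $x_i=0$, with $E_i'(0)<\lambda^*$, and Steps 1--3 do not cover it. Here $\lambda^*-E_i'(0)$ is not bounded by $g_{\mathbf{x}}$ in general: a pool with a very poor starting price can sit arbitrarily far below $\lambda^*$ while $g_{\mathbf{x}}$ is tiny. So the statement, read literally with the maximum over all $i$, fails for such pools. For any inactive pool the upper bound $E_i'(0)-\lambda^*\le g_{\mathbf{x}}$ still holds by Step 3.

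To cover the literal statement I would try two things, in this order:
\begin{enumerate}
\item Show that such pools never arise along the algorithm's trajectory. I expect this to fail, since the greedy initialization leaves bad pools untouched.
\item Otherwise, make explicit that the maximum is meant over active pools together with pools whose price exceeds $\lambda^*$. Pools with $E_i'(0)<\lambda^*$ already satisfy their KKT condition at $x_i=x_i^*=0$, so they are harmless for the later convergence argument. For those pools I would replace $|\cdot|$ by the positive part $(E_i'(x_i)-\lambda^*)_+$, which is bounded by $g_{\mathbf{x}}$.
\end{enumerate}
I expect option 2 to be the one that actually works.
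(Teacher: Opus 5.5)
Your argument is correct for every pool with $x_i>0$ or $x_i^*>0$, and it takes a different route from the paper. You are also right that the literal statement fails for some inactive pools, which exposes a gap in the paper's own proof.

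\textbf{Different route.} The paper locates $\lambda^*$ dynamically. It asserts that the intervals $I=[E_D'(x_D),E_R'(x_R)]$ are nested along the iterations, by legitimacy, and that they shrink to $\lambda^*$, so $\lambda^*\in I$. This depends on the trajectory and on the algorithm converging to the optimum, which is imported from \cite{jiang} rather than proved. Your Steps 1--2 establish $E_D'(x_D)\le\lambda^*\le E_R'(x_R)$ statically, for any feasible $\mathbf{x}$. You use only the KKT conditions and the counting fact that some optimally-active pool is under-allocated and some currently-active pool is over-allocated. You need neither legitimacy nor convergence. That is preferable here, since the lemma is then used to prove a convergence rate.

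\textbf{The inactive-pool case.} The paper's step ``by definition of donor and receiver, $E_i'(x_i)\in I$ for all $i$'' fails. The reason is that $D$ minimizes $E_i'(x_i)$ only over pools with $x_i>0$. A pool stuck at $x_i=0$ with $E_i'(0)<\lambda^*$ lies below $I$. Such a pool is never the receiver, because $\max_j E_j'(x_j)\ge\lambda^*$ by your Step 1, so it stays at $x_i=0$. The statement is therefore false as soon as $g_{\mathbf{x}}<\lambda^*-E_i'(0)$.

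\textbf{The repair.} Your option 2 is the right fix, and it can be stated sharply.
\begin{itemize}
\item The two-sided bound holds for every $i$ with $x_i>0$ or $x_i^*>0$. If $x_i=0<x_i^*$, then $E_i'(0)\ge E_i'(x_i^*)=\lambda^*$, and your Step 3 bounds it from above.
\item Any pool with $x_i=0$ and $E_i'(0)<\lambda^*$ must have $x_i^*=0$.
\end{itemize}
Lemma~\ref{lemma:gradient-estimate} uses the sandwich bound only multiplied by $|x_i^*-x_i|$, so these pools contribute zero. The downstream argument therefore goes through unchanged with your corrected statement.
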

\begin{proof}
    Denote by $I$ the interval $\left[E'_D(x_D),E'_R(x_R)\right]$. By definition of donor and receiver, $E'_i(x_i)\in I$ for all $i$. By the legitimate requirement (Eqn \eqref{eqn:legitimacy}), along the iteration of the transfer algorithm, $I$ form a collection of nested intervals with a limit point $\lambda^*$. Hence, $\lambda^*\in I$. The result follows from the diameter of $I$ is precisely $g_{\textbf{x}}$.
\end{proof}
We show that after each legitimate transfer, the output improvement is not small.
\begin{lemma}[Improvement lower bound] \label{lemma:improvement-bound}
    After each round of the transfer algorithm,
    \begin{equation*}
        F(\mathbf{x}')-F(\mathbf{x})\geq\frac{3g_{\mathbf{x}}^2}{16L},
    \end{equation*}
    where $\mathbf{x}'$ is the allocation vector after the legitimate transfer.
\end{lemma}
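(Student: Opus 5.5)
The plan is to write the improvement as an integral of $\varphi$ and to bound both the integrand and the length of the integration interval using \eqref{eqn:varphi-0} and \eqref{eqn:varphi-derivative}. If the legitimate transfer amount is $\delta$, then $\mathbf{x}'$ differs from $\mathbf{x}$ only in coordinates $D$ and $R$. Hence
\begin{equation*}
F(\mathbf{x}')-F(\mathbf{x})=\bigl[E_R(x_R+\delta)-E_R(x_R)\bigr]+\bigl[E_D(x_D-\delta)-E_D(x_D)\bigr]=\int_0^{\delta}\varphi(t)\,dt .
\end{equation*}
Legitimacy \eqref{eqn:legitimacy} after the transfer, $P_R\le P_D$, says exactly $E_R'(x_R+\delta)\ge E_D'(x_D-\delta)$, i.e. $\varphi(\delta)\ge 0$. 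Since $\varphi$ is strictly decreasing by \eqref{eqn:varphi-derivative}, it follows that $\varphi\ge 0$ on all of $[0,\delta]$. Also $\varphi(t)\ge g_{\mathbf{x}}-2Lt$ by \eqref{eqn:varphi-0} and the lower bound $\varphi'\ge -2L$.

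The key step, and the main obstacle, is a lower bound $\delta\ge g_{\mathbf{x}}/(4L)$ on the accepted transfer. I split according to how the halving procedure stopped.

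\emph{Case 1: the first trial $\delta=x_D/2$ is accepted.} By Lemma~\ref{lemma:legitimacy}, $0>\varphi(x_D)\ge g_{\mathbf{x}}-2Lx_D$. Hence $x_D>g_{\mathbf{x}}/(2L)$, and so $\delta=x_D/2>g_{\mathbf{x}}/(4L)$.

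\emph{Case 2: $\delta$ was obtained by halving.} The previous trial $2\delta$ was rejected, so $\varphi(2\delta)<0$. Then $0>\varphi(2\delta)\ge g_{\mathbf{x}}-4L\delta$, which again gives $\delta>g_{\mathbf{x}}/(4L)$.

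Lemma~\ref{lemma:legitimacy} is needed exactly in Case 1: it guarantees that the very first candidate is not "too short", i.e. that $\varphi$ changes sign within $[0,x_D]$.

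To conclude, set $\tau=g_{\mathbf{x}}/(4L)\le\delta$. Since $\varphi\ge0$ on $[\tau,\delta]$, we may drop that part of the integral and then apply the linear lower bound on $[0,\tau]$:
\begin{equation*}
\int_0^{\delta}\varphi(t)\,dt\;\ge\;\int_0^{\tau}\bigl(g_{\mathbf{x}}-2Lt\bigr)\,dt\;=\;\frac{g_{\mathbf{x}}^2}{4L}-\frac{g_{\mathbf{x}}^2}{16L}\;=\;\frac{3g_{\mathbf{x}}^2}{16L}.
\end{equation*}
This is the claimed bound.
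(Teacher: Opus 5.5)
Your proof is correct and follows the paper's argument. It uses the same integral representation $\int_0^\delta\varphi(t)\,dt$, the same linear bound $\varphi(t)\ge g_{\mathbf{x}}-2Lt$, the same estimate $\delta\ge g_{\mathbf{x}}/(4L)$ from Lemma~\ref{lemma:legitimacy} and the halving rule, and the same final integral. The only difference is presentational. The paper goes through the root $\delta^*$ of $\varphi$ and argues $\delta\ge\delta^*/2$, whereas you split directly into the cases ``first trial accepted'' and ``$2\delta$ rejected''. Your version also states explicitly that $\varphi\ge 0$ on $[\tau,\delta]$, a step the paper uses without saying so.
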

\begin{proof}
    Denote by $\delta$ the legitimate transfer amount from $D$ to $R$. By the fundamental theorem of calculus,
    \begin{align*}
        F(\mathbf{x}')-F(\mathbf{x})&=\left[E_R(x_R+\delta)+E_D(x_D-\delta)\right]+\left[E_R(x_R)-E_D(x_D)\right]\\
        &=\left[E_R(x_R+\delta)-E_R(x_R)\right]+\left[E_D(x_D-\delta)-E_D(x_D)\right]\\
        &=\int_0^\delta E_R'(x_R+t)\,dt - \int_0^\delta E_D'(x_D-t)\,dt\\
        &=\int_0^\delta\varphi(t)\,dt.
    \end{align*}
    By the mean value theorem, there exists $t^*\in(0,t)$ s.t.
    \begin{equation*}
        \frac{\varphi(t)-\varphi(0)}{t-0}=\frac{\varphi(t)-g_{\mathbf{x}}}{t}=\varphi'(t^*).
    \end{equation*}
    By Eqn \eqref{eqn:varphi-derivative},
    \begin{equation} \label{eqn:linear-bound}
        \varphi(t)\geq g_{\mathbf{x}}-2Lt.
    \end{equation}
    Since $\varphi(0)>0$ (Eqn \eqref{eqn:varphi-0}) and $\varphi(x_D)<0$ (Lemma \ref{lemma:legitimacy}), by the intermediate value theorem, there exists $\delta^*\in(0,x_D)$ s.t. $\varphi(\delta^*)=0$.
    
    Substituting to Eqn \eqref{eqn:linear-bound},
    \begin{equation*}
        \delta^*\geq\frac{g_{\mathbf{x}}-\varphi(\delta^*)}{2L}=\frac{g_{\mathbf{x}}}{2L}.
    \end{equation*}
    Since the halving procedure finds the largest $\delta$ with $\varphi(\delta)\geq0$ and rejects $2\delta$,
    \begin{equation*}
        \delta\geq\frac{\delta^*}{2}\geq\frac{g_{\mathbf{x}}}{4L}
    \end{equation*}
    Since $g_{\mathbf{x}}-2Lt>0$ for $t\in\left[0,\frac{g_\mathbf{x}}{4L}\right]$,
    \begin{equation*}
        \int_0^\delta\varphi(t)\,dt \geq \int_0^{\frac{g_{\mathbf{x}}}{4L}}\!(g_{\mathbf{x}}-2Lt)\,dt = \frac{g_{\mathbf{x}}^2}{4L}-\frac{g_{\mathbf{x}}^2}{16L}=\frac{3g_{\mathbf{x}}^2}{16L}.
    \end{equation*}
\end{proof}
We prove an estimate on derivative.
\begin{lemma}[Gradient estimate] \label{lemma:gradient-estimate}
    At each round $k$ of the transfer algorithm,
    \begin{equation*}
        g^{(k)}\geq\sqrt{\frac{\mu h^{(k)}}{2N}}.
    \end{equation*}
\end{lemma}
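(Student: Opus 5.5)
We bound the objective gap $h^{(k)}$ from above by a quantity quadratic in $g^{(k)}$, using strong concavity of each $E_i$ together with the Sandwich lemma (Lemma~\ref{lemma:sandwich}). Write $\mathbf{x}=\mathbf{x}^{(k)}$ and $g=g^{(k)}$. By the $\mu$-strong concavity of each $E_i$ on $[0,X]$, every $i$ satisfies
\begin{equation*}
E_i(x_i^*)\leq E_i(x_i)+E_i'(x_i)(x_i^*-x_i)-\frac{\mu}{2}(x_i^*-x_i)^2 .
\end{equation*}
Summing over $i$ and using $\sum_i x_i^*=\sum_i x_i=X$, we may subtract $\lambda^*\sum_i(x_i^*-x_i)=0$. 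This gives
\begin{equation*}
h(\mathbf{x})\leq\sum_{i=1}^N\left[\left(E_i'(x_i)-\lambda^*\right)(x_i^*-x_i)-\frac{\mu}{2}(x_i^*-x_i)^2\right].
\end{equation*}

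Each summand is a concave quadratic in $d_i=x_i^*-x_i$. We therefore maximize it over $d_i\in\mathbb{R}$, which drops the constraints and can only enlarge the bound. Each term is then at most $\frac{(E_i'(x_i)-\lambda^*)^2}{2\mu}$. Summing,
\begin{equation*}
h(\mathbf{x})\leq\frac{1}{2\mu}\sum_{i=1}^N\left(E_i'(x_i)-\lambda^*\right)^2\leq\frac{N}{2\mu}\max_i\left|E_i'(x_i)-\lambda^*\right|^2\leq\frac{N g^2}{2\mu},
\end{equation*}
where the last step is Lemma~\ref{lemma:sandwich}. Rearranging gives $g\geq\sqrt{\mu h/N}$. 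This is stronger than the claimed $\sqrt{\mu h/(2N)}$, so the statement follows, with a factor of $2$ to spare.

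The main point to check is subtracting the $\lambda^*$ term when some pools are inactive at the optimum, i.e.\ when $E_i'(0)<\lambda^*$ for a pool with $x_i^*=0$. The subtraction itself needs only $\sum_i d_i=0$, which holds regardless of which pools are active. The remaining issue is whether Lemma~\ref{lemma:sandwich} applies to every $i$, including pools with $x_i=0$. The lemma is stated with a maximum over all $i$, but its proof uses $E_i'(x_i)\in[E_D'(x_D),E_R'(x_R)]$. The upper end holds because the receiver is the argmax over all pools. The lower end is only guaranteed over pools with $x_i>0$, since the donor is chosen only among those.

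If this is a concern, handle a pool with $x_i=0$ separately. Its $d_i=x_i^*\geq0$. If $E_i'(0)\leq\lambda^*$, then the linear term $(E_i'(0)-\lambda^*)d_i$ is nonpositive, so the whole summand is at most $0$ and can be dropped. If instead $E_i'(0)>\lambda^*$, then $E_i'(0)$ lies in $(\lambda^*,E_R'(x_R)]$, so $|E_i'(0)-\lambda^*|\leq g$ and the bound above applies to it. Either way the estimate $h\leq Ng^2/(2\mu)$ survives, and we conclude $g\geq\sqrt{\mu h/N}\geq\sqrt{\mu h/(2N)}$.
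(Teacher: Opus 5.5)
Your proof is correct, and it takes a different route from the paper. The paper works in two steps. First-order concavity at $\mathbf{x}^{(k)}$, the sandwich lemma and Cauchy--Schwarz give the error bound $h^{(k)}\leq g^{(k)}\sqrt{N}\,\|\mathbf{x}^{(k)}-\mathbf{x}^*\|_2$. Strong concavity expanded at the optimum gives the quadratic growth bound $h^{(k)}\geq\frac{\mu}{2}\|\mathbf{x}^{(k)}-\mathbf{x}^*\|_2^2$, and the distance to $\mathbf{x}^*$ is then eliminated. You instead keep the $-\frac{\mu}{2}d_i^2$ term in the expansion at the current iterate and maximize each one-dimensional quadratic, which is the standard proof that strong concavity implies a Polyak--{\L}ojasiewicz inequality. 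This is shorter and avoids Cauchy--Schwarz. It also never expands at $\mathbf{x}^*$, where for pools inactive at the optimum the paper's equality $E_i'(x_i^*)=\lambda^*$ is really only $E_i'(0)\leq\lambda^*$. And it gives a better constant. Note that rearranging $h\leq Ng^2/(2\mu)$ actually yields $g\geq\sqrt{2\mu h/N}$, not merely $\sqrt{\mu h/N}$. That is a factor $4$ over the paper in $g^2$, and it would sharpen the contraction factor in Theorem~\ref{thm:main} to $1-\frac{3}{8\kappa N}$. The paper's route, in exchange, yields the error bound and quadratic growth as separate facts and uses strong concavity only once. Your worry about pools with $x_i=0$ is well founded. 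The donor is chosen only among pools with $x_i>0$, so the sandwich lemma as stated (maximum over all $i$) can genuinely fail for a pool that is never used: $g\to0$ while $\lambda^*-E_i'(0)$ stays fixed. Your case split is the right repair. In fact, when $E_i'(0)\leq\lambda^*$, strict concavity forces $x_i^*=0$, so $d_i=0$ and the term vanishes. The same observation silently rescues the paper's term-by-term use of the lemma. All either argument really needs is $\lambda^*\in[E_D'(x_D),E_R'(x_R)]$.
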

\begin{proof}
    By the concavity of $E_i$,
    \begin{equation*}
        E_i(x_i^*)\leq E_i\left(x^{(k)}_i\right)+E_i'\left(x^{(k)}_i\right)\left(x_i^*-x^{(k)}_i\right).
    \end{equation*}
    Sum over $i$,
    \begin{equation*}
        h^{(k)}\leq\sum_i E_i'\left(x^{(k)}_i\right)\left(x_i^*-x^{(k)}_i\right).
    \end{equation*}
    Subtract a zero term $\lambda^*\sum_i\left(x_i^*-x^{(k)}_i\right)=0$,
    \begin{equation} \label{eqn:concavity}
        \begin{split}
            h^{(k)}&\leq\sum_i\left[E_i'\left(x^{(k)}_i\right)-\lambda^*\right]\left(x_i^*-x^{(k)}_i\right)\\
            &\leq\sum_i\left|E_i'\left(x^{(k)}_i\right)-\lambda^*\right|\cdot\left|x_i^*-x^{(k)}_i\right|\\
            &\leq g^{(k)}\left\|\mathbf{x}^{(k)}-\mathbf{x}^*\right\|_1\\
            &\leq g^{(k)}\sqrt{N}\left\|\mathbf{x}^{(k)}-\mathbf{x}^*\right\|_2
        \end{split}
    \end{equation}
    where the second last inequality is by Lemma \ref{lemma:sandwich} and the last by Cauchy-Schwarz.

    On the other hand, by $\mu$-strong concavity of each $E_i$,
    \begin{align*}
        E_i\left(x^{(k)}_i\right)&\leq E_i(x_i^*)+E'_i(x_i^*)\left(x^{(k)}_i-x_i^*\right)-\frac{\mu}{2}\left(x^{(k)}_i-x_i^*\right)^2\\
        &=E_i(x_i^*)+\lambda^*\left(x^{(k)}_i-x_i^*\right)-\frac{\mu}{2}\left(x^{(k)}_i-x_i^*\right)^2
    \end{align*}
    Sum over $i$, we obtain
    \begin{equation} \label{eqn:strong-concavity}
        h^{(k)}\geq\frac{\mu}{2}\left\|\mathbf{x}^{(k)}-\mathbf{x}^*\right\|^2_2.
    \end{equation}
    Combine Eqns \eqref{eqn:concavity} and \eqref{eqn:strong-concavity},
    \begin{equation*}
        h^{(k)}\leq g^{(k)}\sqrt{\frac{2Nh^{(k)}}{\mu}}\implies g^{(k)}\geq\sqrt{\frac{\mu h^{(k)}}{2N}},
    \end{equation*}
    as desired.
\end{proof}
\begin{theorem}[Convergence rate] \label{thm:main}
    The transfer algorithm satisfies 
    \begin{equation*}
        h^{(k+1)}\leq\left(1-\frac{3}{32\kappa N}\right)^{k+1}\,h^{(0)}.
    \end{equation*}
    The objective gap is at most $\varepsilon$ after $\mathcal{O}\left(N\kappa\log\frac{1}{\varepsilon}\right)$ rounds.
\end{theorem}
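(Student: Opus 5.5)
The plan is to combine Lemma \ref{lemma:improvement-bound} and Lemma \ref{lemma:gradient-estimate} into a one-step contraction of the objective gap, and then unroll it. Write $\mathbf{x}^{(k+1)}$ for the allocation after the legitimate transfer in round $k$. Since $F(\mathbf{x}^*)$ is fixed, the decrease of the gap equals the increase of $F$:
\begin{equation*}
h^{(k)}-h^{(k+1)}=F\left(\mathbf{x}^{(k+1)}\right)-F\left(\mathbf{x}^{(k)}\right).
\end{equation*}
By Lemma \ref{lemma:improvement-bound} this is at least $\frac{3g_k^2}{16L}$. Squaring the bound of Lemma \ref{lemma:gradient-estimate} gives $g_k^2\geq\frac{\mu h^{(k)}}{2N}$. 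Substituting,
\begin{equation*}
h^{(k)}-h^{(k+1)}\geq\frac{3\mu}{32LN}h^{(k)}=\frac{3}{32\kappa N}h^{(k)},
\end{equation*}
that is, $h^{(k+1)}\leq\left(1-\frac{3}{32\kappa N}\right)h^{(k)}$.

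Before iterating, I will record that the contraction factor lies in $[0,1)$. Since $\mu\leq L$, we have $\kappa\geq1$ and $N\geq1$, so $\frac{3}{32\kappa N}\leq\frac{3}{32}<1$. Induction on $k$ then gives the stated bound $h^{(k+1)}\leq\left(1-\frac{3}{32\kappa N}\right)^{k+1}h^{(0)}$.

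For the round count, I will use $1-a\leq e^{-a}$, which gives $h^{(k)}\leq h^{(0)}\exp\left(-\frac{3k}{32\kappa N}\right)$. This is at most $\varepsilon$ once $k\geq\frac{32\kappa N}{3}\log\frac{h^{(0)}}{\varepsilon}$. Treating $h^{(0)}$ as a constant, this is $\mathcal{O}\left(N\kappa\log\frac{1}{\varepsilon}\right)$ rounds.

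There is no serious obstacle, since the estimates compose directly. The one point needing care is that both lemmas must hold at every round, including rounds where the current allocation is not yet near optimal. Lemma \ref{lemma:gradient-estimate} needs the sandwich property (Lemma \ref{lemma:sandwich}), and Lemma \ref{lemma:improvement-bound} needs $\varphi(x_D)<0$ (Lemma \ref{lemma:legitimacy}). Both are stated as holding throughout the run, under the greedy initialization assumption of the Remark, so I will invoke them at every round $k$ without further argument.

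If the algorithm has already terminated, or $g_k=0$, then $h^{(k)}=0$ by Lemma \ref{lemma:gradient-estimate}, and the bound holds trivially.
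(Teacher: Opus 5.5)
Your argument is essentially the paper's proof: you write the gap decrease as the output increase, apply Lemma~\ref{lemma:improvement-bound} and the squared form of Lemma~\ref{lemma:gradient-estimate} to get $h^{(k+1)}\leq\left(1-\frac{3}{32\kappa N}\right)h^{(k)}$, iterate, and read off $k\geq\frac{32N\kappa}{3}\ln\frac{h^{(0)}}{\varepsilon}$; your checks that $\kappa\geq1$ and that $1-a\leq e^{-a}$ only make explicit what the paper leaves implicit. One small correction to your closing aside: termination via the test $\frac{P_D-P_R}{P_D}<\varepsilon$ does not force $g_k=0$ or $h^{(k)}=0$, so the bound should simply be read as applying to the rounds actually executed, and this does not affect the argument.
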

\begin{proof}
    By Lemma \ref{lemma:improvement-bound},
    \begin{equation*}
        h^{(k+1)}\leq h^{(k)}-\frac{3g_k^2}{16L}.
    \end{equation*}
    By Lemma \ref{lemma:gradient-estimate},
    \begin{equation*}
        h^{(k+1)}\leq h^{(k)}-\frac{3}{16L}\frac{\mu h^{(k)}}{2N}=\left(1-\frac{3}{32\kappa N}\right)h^{(k)}.
    \end{equation*}
    Iterate and the theorem is proved.
    
    For $h^{(k)}\leq\varepsilon$, we need $k\geq\frac{32N\kappa}{3}\ln\frac{h^{(0)}}{\varepsilon}=\mathcal{O}\left(N\kappa\log\frac{1}{\varepsilon}\right)$.
\end{proof}
\begin{remark}
    Note that here we used $\varepsilon$ as a tolerance term in the final output but $\varepsilon$ was defined in \cite{jiang} as a tolerance term in terms of the relative price difference of the final donor and receiver. This difference is however immaterial to the convergence rate.
\end{remark}
\section{Experiment} \label{sec:experiment}
We demonstrate that the $\kappa$ factor in Theorem \ref{thm:main} is loose via an experiment on $N=10$ constant-product pools. Half the pools have fixed reserves $(100,100)$ and the other half $(100s,100s)$, $s\in\{1,2,5,10,50,100,500,1000\}$.
\begin{table}[htbp!]
    \centering
    \begin{tabular}{c| c| c}
        \hline
        $s$ & $\kappa$ & rounds\\
        \hline
        1   &     8 &  0\\
        2   &     8 & 75\\
        5   &     9 & 70\\
        10  &    13 & 80\\
        50  &    53 & 75\\
        100 &   103 & 85\\
        500 &   503 & 90\\
        1000&  1003 & 70\\
        \hline
    \end{tabular}
    \caption{$\kappa$ and total rounds. $X=100$ and $\varepsilon=10^{-10}$}
    \label{tab:kappa-dependence}
\end{table}

As $\kappa$ increases from $8$ to $1003$, the round count stays between $70$ and $90$, essentially constant.

\section{Discussion} \label{sec:discussion}
In this article, we proved the convergence rate $\mathcal{O}(N\kappa\log\frac{1}{\varepsilon})$ for the one-hop transfer algorithm. The linear dependence on $N$, which essentially comes from the application of Cauchy-Schwarz to bound $\ell_1$ norm by $\ell_2$ norm, happens to be optimal: To determine the best allocation, at least all $N$ pools have to be checked.

However, our numerical experiment suggests a much lesser dependence on $\kappa$: The number of rounds almost doesn't depend on $\kappa$, or at most some logarithmic dependence. We conjecture that during the transfer algorithm, the dependence on $\kappa$ quickly decreases in certain sense. However we are not able to obtain useful quantitative estimate on this.

In practice, large amount of $X$ can trigger many tick crossing in Uniswap v3 type AMMs, which leads to out of gas. This is however not related to the rate of convergence.

\bibliographystyle{splncs04}
\bibliography{biblio}

\end{document}